\newtheorem{lemma}{Lemma}
\newtheorem{theorem}{Theorem}
\newtheorem{remark}{Remark}
\title{ON CERTAIN SUMS INVOLVING THE LARGEST PRIME FACTOR OVER INTEGER SEQUENCES}
\author{Mihoub BOUDERBALA}
\date{}
\begin{document}

\maketitle

\begin{abstract}
\noindent\textbf{Abstract.} Given an integer $n \ge 2$, its prime factorization is expressed as $n= \prod_{i=1}^s p_i^{a_i}$. We define the function $f(n)$ as the smallest positive integer such that $f(n)!$ is divisible by $n$. The main objective of this paper is to derive an asymptotic formula for both sums $\sum_{n \le x} f(n)$ and $\sum_{n \le x, n \in S_k} f(n)$, where $S_k$ denotes the set of all $k$-free integers.

\vspace{0.5cm}
\noindent\textbf{Résumé.} Étant donné un entier $n \ge 2$, sa décomposition en facteurs premiers s'écrit $n= \prod_{i=1}^s p_i^{a_i}$. On définit la fonction $f(n)$ comme le plus petit entier positif tel que $f(n)!$ soit divisible par $n$. Ce papier vise surtout à trouver une formule asymptotique pour les deux sommes $\sum_{n \le x} f(n)$ et $\sum_{n \le x, n \in S_k} f(n)$, où $S_k$ désigne l'ensemble de tous les entiers $k$-libres.
\end{abstract}

\noindent\textbf{Keywords.} Summation formula, largest prime factor, $k$-free numbers.

\vspace{0.2cm}
\noindent\textbf{2010 Mathematics Subject Classification:} 11A05; 11N37.

\section{Introduction}
Throughout this article, all asymptotic estimates are understood as $x \to \infty$, unless otherwise specified. Given an integer $n \ge 2$, its decomposition into prime factors is written as $n= p_1^{a_1} p_2^{a_2} \dots p_s^{a_s}$, where $p_1 < p_2 < \dots < p_s$ are primes and $a_1, a_2, \dots, a_s$ are positive integers. We define the function $f(n)$ as the smallest positive integer such that $f(n)!$ is divisible by $n$. Additionally, we define the function $P(n)$, which denotes the largest prime factor in the factorization of the positive integer $n$. The study of sums involving $f(n)$ is not only of intrinsic arithmetic interest, but also reveals deeper connections with the anatomy of integers, the distribution of smooth numbers, and mean value theorems for multiplicative functions. Since $f(n)$ is intrinsically linked to $P(n)$ as shown in Lemma 1 below. Our asymptotic estimates naturally extend classical results on sums weighted by $P(n)$, such as those of Alladi and Erdős. Furthermore, the appearance of the zeta values $\zeta(2)$ and $\zeta(2k)$ in our main theorems reflects the underlying structure of Euler's product, suggesting potential links with Dirichlet series and $L$-functions associated with factorial divisibility.

The function $P(n)$ appeared in the article by K. Alladi and P. Erdős, published in 1977 [1], in which a number of its properties were studied, in particular its asymptotic behaviour, where they were able to show that for any real number $x \ge 1$, we hold
\begin{equation}
\sum_{n \le x} P(n) = \frac{\pi^2}{12} \frac{x^2}{\log x} + O\left( \frac{x^2}{\log^2 x} \right). \tag{1}
\end{equation}
Given fixed integers $k \ge 2$, we say that $n$ is an $k$-free number if $\max(a_1, a_2, \dots, a_s) < k$.
We will denote by $S_k$ the set of $k$-free integers. Let us note that the estimates for the counting function $S_k(x) := \text{card}\{n \in \mathbb{N} \mid n \le x \text{ and } n \in S_k\}$, of this family of numbers are already known. Furthermore, for fixed integers $k \ge 2$, they are given by
\begin{equation}
S_k(x) = \frac{x}{\zeta(k)} + O\left(x^{\frac{1}{k}}\right). \tag{2}
\end{equation}
The proof of this formula (2), in the simplest case, i.e., for $k=2$, can already be found in the book by Niven, Zuckerman, and Montgomery [7, Theorem 8.25]. A complete treatment for arbitrary $k \ge 2$ can be found in the survey paper by Pappalardi [8], which confirms the uniformity of the argument across all $k \ge 2$. In this work, we focus on deriving asymptotic formulas for two specific sums: $\sum_{n \le x} f(n)$ and $\sum_{n \le x, n \in S_k} f(n)$, where $S_k$ denotes the set of all $k$-free integers. These sums are closely related to the arithmetic function $P(n)$, which plays a central role in our analysis. The main results of this study, obtained primarily through rigorous analytical methods, are presented as theorems in the following section. These theorems are built upon two preliminary lemmas, which provide the necessary technical tools for their proofs.

\section{Main results}

\begin{lemma}
Given an integer $n \ge 2$, let $P(n)$ stand for its largest prime factor, with $P(1)=1$. If $P(n)^2 > n$ we have $f(n)= P(n)$.
\end{lemma}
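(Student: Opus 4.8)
The plan is to prove the two inequalities $f(n) \ge P(n)$ and $f(n) \le P(n)$ separately; together they give the claimed equality. Write $p := P(n)$ and $n = p_1^{a_1}\cdots p_s^{a_s}$ with $p_1 < \dots < p_s = p$, and recall that by definition $f(n)$ is the least positive integer $m$ with $n \mid m!$.

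For the lower bound, note that $p \mid n$ and $n \mid f(n)!$, hence $p \mid f(n)!$. Since $p$ is prime, it divides none of $1, 2, \dots, p-1$, so $p \mid m!$ forces $m \ge p$; applying this with $m = f(n)$ gives $f(n) \ge p$. For the upper bound, I would first use the hypothesis $p^2 > n$ to pin down the multiplicity of $p$ in $n$: since $p^{a_s} \le n < p^2$, we must have $a_s = 1$, i.e.\ $p$ divides $n$ exactly once. Consequently $m := n/p$ is a positive integer with $m = n/p < p$, that is $1 \le m \le p-1$. Then $m$ occurs among the factors $1, 2, \dots, p-1$ of $(p-1)!$, so $m \mid (p-1)!$, and therefore $n = m \cdot p \mid (p-1)! \cdot p = p!$. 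By minimality of $f(n)$ this yields $f(n) \le p$.

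Combining the two bounds gives $f(n) = p = P(n)$, as desired. I do not expect a genuine obstacle here: the only step needing more than the bare definitions is the deduction $a_s = 1$ from $p^2 > n$, and that is a one-line comparison of exponents. The rest is a direct unwinding of the definition of $f$ together with the elementary fact that a prime $p$ divides $m!$ precisely when $m \ge p$.
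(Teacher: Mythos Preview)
Your proof is correct and follows essentially the same approach as the paper: both establish $f(n)\ge P(n)$ from the fact that the prime $p=P(n)$ cannot divide $(p-1)!$, and both establish $f(n)\le P(n)$ by first deducing $a_s=1$ from $p^2>n$ and then showing $n\mid p!$ via the observation that the cofactor $n/p$ is smaller than $p$. Your version is marginally tidier in the upper bound, since treating $m=n/p$ as a single integer in $\{1,\dots,p-1\}$ avoids the paper's implicit appeal to coprimality when combining the separate divisibilities $p_i^{a_i}\mid p_s!$ into $\prod_i p_i^{a_i}\mid p_s!$.
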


\begin{proof}
Let $n$ be an integer such that $n \ge 2$, and $n= p_1^{a_1} p_2^{a_2} \dots p_s^{a_s}$. We have $P(n)= p_s$. If $P(n)^2 > n$, it necessarily follows $a_s= 1$, and we have
$$ p_1^{a_1} p_2^{a_2} \dots p_{s-1}^{a_{s-1}} < n^{\frac{1}{2}} < P(n)= p_s. $$
Moreover, for all $i= 1, 2, \dots, s-1$, we have
$$ p_i^{a_i} < p_s \Rightarrow p_i^{a_i} \mid p_s!, $$
which implies
$$ p_1^{a_1} p_2^{a_2} \dots p_{s-1}^{a_{s-1}} \mid p_s!. $$
Therefore, for each $i= 1, 2, \dots, s-1$, we have $p_i^{a_i} \mid p_s!$ and since $a_s= 1$, we also have $p_s \mid p_s!$.
Consequently, it follows that
\begin{equation}
n \mid p_s!. \tag{3}
\end{equation}
But since, for any prime number $p$, $p$ can never be found in the decomposition of $(p-1)!$, it follows that the smallest prime number verifying (3) is $p_s$, hence $f(n)= p_s = P(n)$.
\end{proof}

\begin{lemma}
Let $x$ be a real number such that $x > 1$, and $\delta$ be any real function such that $\forall n \in \mathbb{N}$, $1 \le \delta(n) \le 1$, we have
$$ \sum_{n \le x^{\frac{1}{2}}} \frac{\delta(n)}{n^2 \log(x/n)} = \frac{1}{\log x} \sum_{n=1}^{\infty} \frac{\delta(n)}{n^2} + O\left( \frac{1}{\log^2 x} \right). $$
\end{lemma}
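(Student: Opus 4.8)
The plan is to Taylor-expand the factor $1/\log(x/n)$ about $1/\log x$ and keep all error terms under control by means of the hypothesis $|\delta(n)| \le 1$ (which is how I read the stated condition, the written ``$1 \le \delta(n) \le 1$'' being an evident typo; only the boundedness of $\delta$ is used). The decisive observation is that every $n$ in the range of summation satisfies $n \le x^{1/2}$, hence $\log n \le \tfrac12\log x$ and consequently $\log(x/n) = \log x - \log n \ge \tfrac12 \log x > 0$. This gives, uniformly for $n \le x^{1/2}$,
$$ \frac{1}{\log(x/n)} = \frac{1}{\log x} + \frac{\log n}{\log x\,\log(x/n)} = \frac{1}{\log x} + O\!\left(\frac{\log n}{\log^2 x}\right), $$
the implied constant being absolute (indeed at most $2$, since $\log(x/n) \ge \tfrac12\log x$).

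Multiplying by $\delta(n)/n^2$ and summing over $n \le x^{1/2}$ then yields
$$ \sum_{n \le x^{1/2}} \frac{\delta(n)}{n^2\log(x/n)} = \frac{1}{\log x}\sum_{n \le x^{1/2}} \frac{\delta(n)}{n^2} + O\!\left(\frac{1}{\log^2 x}\sum_{n \le x^{1/2}} \frac{\log n}{n^2}\right). $$
For the error term I would simply invoke the convergence of $\sum_{n \ge 1}\log n/n^2$ (it equals $-\zeta'(2)$), so that the bracketed sum is $O(1)$ and the whole error term is $O(1/\log^2 x)$.

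It remains to replace the truncated series $\sum_{n \le x^{1/2}}\delta(n)/n^2$ in the main term by the complete series. Because $|\delta(n)| \le 1$, the tail obeys $\bigl|\sum_{n > x^{1/2}}\delta(n)/n^2\bigr| \le \sum_{n > x^{1/2}} n^{-2} = O(x^{-1/2})$, whence
$$ \frac{1}{\log x}\sum_{n \le x^{1/2}} \frac{\delta(n)}{n^2} = \frac{1}{\log x}\sum_{n=1}^{\infty} \frac{\delta(n)}{n^2} + O\!\left(\frac{1}{x^{1/2}\log x}\right), $$
and since $x^{-1/2}\log x \to 0$ the last error is $O(1/\log^2 x)$. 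Combining the two displays gives the assertion.

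I do not anticipate a genuine obstacle: the one point demanding care is that every implied constant be independent of $x$ and of the particular function $\delta$, and this is exactly what the restriction $n \le x^{1/2}$ secures, by forcing $\log(x/n)$ to stay comparable to $\log x$. As usual, the estimates are asserted for $x$ sufficiently large, consistent with the convention that all asymptotics are taken as $x \to \infty$.
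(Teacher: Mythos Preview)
Your argument is correct and mirrors the paper's proof essentially step for step: both use $n\le x^{1/2}$ to ensure $\log(x/n)\asymp\log x$, expand $1/\log(x/n)$ as $1/\log x + O(\log n/\log^2 x)$, bound the error by the convergent series $\sum \log n/n^2$, and then extend the truncated main term to the full series at cost $O(x^{-1/2}/\log x)$. Your reading of the hypothesis as $|\delta(n)|\le 1$ is the intended one.
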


\begin{proof}
Since $n \le x^{\frac{1}{2}}$, it follows that $\frac{\log n}{\log x} \le \frac{1}{2}$. Therefore,
\begin{align*}
\frac{\delta(n)}{n^2 \log(x/n)} &= \frac{\delta(n)}{n^2 \log x \left(1 - \frac{\log n}{\log x}\right)} \\
&= \frac{\delta(n)}{n^2 \log x} \left(1 + O\left(\frac{\log n}{\log x}\right)\right) \\
&= \frac{\delta(n)}{n^2 \log x} + O\left(\frac{\delta(n) \log n}{n^2 \log^2 x}\right).
\end{align*}
This means,
$$ \sum_{n \le x^{\frac{1}{2}}} \frac{\delta(n)}{n^2 \log(x/n)} = \frac{1}{\log x} \sum_{n \le x^{\frac{1}{2}}} \frac{\delta(n)}{n^2} + O\left( \frac{1}{\log^2 x} \sum_{n=1}^{\infty} \frac{\delta(n) \log n}{n^2} \right). $$
Noting that, the associated series $\sum_{n=1}^{\infty} \frac{\delta(n) \log n}{n^2}$ is convergent and
\begin{align*}
\frac{1}{\log x} \sum_{n \le x^{\frac{1}{2}}} \frac{\delta(n)}{n^2} &= \frac{1}{\log x} \sum_{n=1}^{\infty} \frac{\delta(n)}{n^2} - \frac{1}{\log x} \sum_{n > x^{\frac{1}{2}}} \frac{\delta(n)}{n^2} \\
&= \frac{1}{\log x} \sum_{n=1}^{\infty} \frac{\delta(n)}{n^2} + O\left( \frac{1}{\log x} \int_{x^{\frac{1}{2}}}^{\infty} \frac{dt}{t^2} \right) \\
&= \frac{1}{\log x} \sum_{n=1}^{\infty} \frac{\delta(n)}{n^2} + O\left( \frac{1}{x^{\frac{1}{2}} \log x} \right).
\end{align*}
Leading
$$ \sum_{n \le x^{\frac{1}{2}}} \frac{\delta(n)}{n^2 \log(x/n)} = \frac{1}{\log x} \sum_{n=1}^{\infty} \frac{\delta(n)}{n^2} + O\left( \frac{1}{\log^2 x} \right). $$
\end{proof}

\begin{theorem}
Let $x$ be any real number such that $x \ge 1$. We have
$$ \sum_{n \le x} f(n) = \zeta(2) \frac{x^2}{\log x} + O\left( \frac{x^2}{\log^2 x} \right). $$
\end{theorem}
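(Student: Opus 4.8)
The plan is to use Lemma 1 to replace $f$ by $P$ on the bulk of the range, reducing the problem to a sum over primes, and then to evaluate that prime sum by the prime number theorem together with Lemma 2. First I would split the range according to whether $P(n)^2$ exceeds $n$:
$$ \sum_{n \le x} f(n) = \sum_{\substack{n \le x \\ P(n)^2 > n}} f(n) + \sum_{\substack{n \le x \\ P(n)^2 \le n}} f(n). $$
Lemma 1 turns the first sum into $\sum_{n \le x,\, P(n)^2 > n} P(n)$. As established in the proof of Lemma 1, the condition $P(n)^2 > n$ forces $P(n)$ to divide $n$ exactly once; writing $p = P(n)$ and $m = n/p$ then sets up a bijection between $\{\, n \le x : P(n)^2 > n \,\}$ and the set of pairs $(m,p)$ with $p$ prime, $1 \le m < p$, and $mp \le x$ (here $m < p$ automatically forces $P(m) < p$, so that $P(mp) = p$ and $p^2 > mp$). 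Consequently
$$ \sum_{\substack{n \le x \\ P(n)^2 > n}} P(n) = \sum_{m \ge 1} \ \sum_{\substack{p \text{ prime} \\ m < p \le x/m}} p, $$
where only the indices $m \le \sqrt{x}$ contribute.

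Next I would estimate the inner sum by partial summation from $\pi(t) = t/\log t + O(t/\log^2 t)$, obtaining, uniformly for $m \le \sqrt{x}$, an estimate of the form
$$ \sum_{\substack{p \text{ prime} \\ m < p \le x/m}} p = \frac{c\, x^2}{m^2 \log(x/m)} + O\!\left( \frac{x^2}{m^2 \log^2(x/m)} \right) + O(m^2), $$
where $c$ is the absolute constant produced by the prime number theorem and the term $O(m^2)$ absorbs $\sum_{p \le m} p$. Summing over $m \le \sqrt{x}$: the terms $O(m^2)$ contribute $O(x^{3/2})$; since $\log(x/m) \ge \tfrac12 \log x$ on this range, the middle error contributes $O(x^2/\log^2 x)$; and the principal term is $c\, x^2 \sum_{m \le \sqrt{x}} \frac{1}{m^2 \log(x/m)}$, which by Lemma 2 taken with $\delta \equiv 1$ (so that $\sum_{n \ge 1} \delta(n)/n^2 = \zeta(2)$) equals $c\,\zeta(2)\,\dfrac{x^2}{\log x} + O(x^2/\log^2 x)$. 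This produces the main term of the theorem.

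Finally I would show the second sum is an error term. If $n \le x$ and $P(n)^2 \le n$, then every prime power $q^a \,\|\, n$ satisfies $q \le \sqrt{x}$ and $q^a \le x$; from $q^a \mid (aq)!$ one gets $f(q^a) \le aq \le q\,\log x / \log q \le 2\sqrt{x}$, using $q \le \sqrt x$ and elementary monotonicity (the prime $q = 2$ being trivial). Hence $f(n) = \max_{q^a \| n} f(q^a) \le 2\sqrt{x}$, and since there are at most $x$ integers $n \le x$ with $P(n)^2 \le n$, the second sum is $O(x^{3/2}) = O(x^2/\log^2 x)$. Collecting the three contributions gives $\sum_{n \le x} f(n) = \zeta(2)\,\dfrac{x^2}{\log x} + O\!\left( \dfrac{x^2}{\log^2 x} \right)$.

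The step I expect to be the main obstacle is the prime-sum estimate: one must carry out the Abel summation so that the error is a genuine $1/\log^2$-saving that is \emph{uniform} in $m$ all the way up to $m = \sqrt{x}$, so that the accumulated errors really do sum to $O(x^2/\log^2 x)$; and one must pin down the constant $c$, hence the coefficient of $\zeta(2)$, correctly, which amounts to reconciling the prime-sum asymptotic with the normalization of the tail series in Lemma 2. By comparison, the bound $f(n) \ll \sqrt{x}$ on the complementary set, together with the estimate that this set has $O(x)$ elements, is easy and rests only on the elementary inequality $f(q^a) \le aq$.
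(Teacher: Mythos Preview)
Your proposal is correct and follows essentially the same route as the paper: the same split at $P(n)^2 \gtrless n$, the same use of Lemma~1 to pass to the double sum $\sum_{m\le\sqrt{x}}\sum_{m<p\le x/m} p$, Abel summation on the inner prime sum via $\pi(t)=t/\log t+O(t/\log^2 t)$, and then Lemma~2 with $\delta\equiv 1$ to extract the $\zeta(2)$ main term. The only cosmetic differences are that the paper splits the inner prime range at $\sqrt{x}$ instead of absorbing $\sum_{p\le m}p$ into an $O(m^2)$ term, and it bounds the complementary sum via the cited inequality $f(n)\ll P(n)\log n$ rather than your direct estimate $f(q^a)\le aq\le 2\sqrt{x}$.
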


\begin{proof}
The idea of proving the theorem is to start by seeing that
\begin{equation}
\sum_{n \le x} f(n) = \sum_{n \le x, P(n)^2 \le n} f(n) + \sum_{n \le x, P(n)^2 > n} f(n). \tag{4}
\end{equation}
In order to find an estimate for the first subsum appearing in the principal sum (4), we begin by noting that $f(n) \ll P(n) \log n$. This follows, for instance, from the bound $f(n) = \max_{p^k \mid n} f(p) \le P(n) \log_2 n$, as shown in [5, section 1.4]. A sharper bound $f(n) \ll P(n) \log n \log P(n) + P(n)$ can be found in [6] and [4]. Consequently, we have
$$ \sum_{n \le x, P(n)^2 \le n} f(n) \ll \sum_{n \le x, P(n)^2 \le n} P(n) \log n \le \sum_{n \le x} n^{\frac{1}{2}} \log n, $$
and it suffices to estimate this latter sum. We proceed with a simpler and more direct approach, we observe that
$$ \sum_{n \le x} n^{\frac{1}{2}} \log n \le x^{\frac{1}{2}} \log x \sum_{n \le x} 1 \ll x^{\frac{3}{2}} \log x. $$
From this, it follows immediately that
\begin{equation}
\sum_{n \le x, P(n)^2 \le n} f(n) \ll x^{\frac{3}{2}} \log x. \tag{5}
\end{equation}
To get an explicit estimation at the second subsum of the sum (4), we first use the result obtained in the previous lemma, which it leads to
$$ \sum_{n \le x, P(n)^2 > n} f(n) = \sum_{n \le x, P(n)^2 > n} P(n) = \sum_{mp \le x, p^2 > mp} p. $$
We can see that if $P(n)^2 > n$, then $n^{\frac{1}{2}} < p$. This implies that $n^{\frac{1}{2}} m < pm = n$, so $m < n^{\frac{1}{2}} < x^{\frac{1}{2}}$. Therefore, we conclude that
\begin{align*}
\sum_{n \le x, P(n)^2 > n} f(n) &= \sum_{m \le x^{\frac{1}{2}}} \sum_{m < p \le \frac{x}{m}} p \\
&= \sum_{m \le x^{\frac{1}{2}}} \sum_{m < p \le x^{\frac{1}{2}}} p + \sum_{m \le x^{\frac{1}{2}}} \sum_{x^{\frac{1}{2}} < p \le \frac{x}{m}} p,
\end{align*}
and hence
\begin{equation}
\sum_{n \le x, P(n)^2 > n} f(n) = \sum_{m \le x^{\frac{1}{2}}} \sum_{x^{\frac{1}{2}} < p \le \frac{x}{m}} p + O\left( \sum_{m \le x^{\frac{1}{2}}} \sum_{m < p \le x^{\frac{1}{2}}} x^{\frac{1}{2}} \right). \tag{6}
\end{equation}
We now recall the function $\pi(x)$, which counts all prime numbers less than or equal to $x$, where
\begin{equation}
\pi(x) = \frac{x}{\log x} + O\left( \frac{x}{\log^2 x} \right). \quad (\text{see [3, p.119]}) \tag{7}
\end{equation}
Thus, based on the latter, we can see that
\begin{equation}
\sum_{m \le x^{\frac{1}{2}}} \sum_{m < p \le x^{\frac{1}{2}}} x^{\frac{1}{2}} \le x^{\frac{1}{2}} \sum_{m \le x^{\frac{1}{2}}} \pi(x^{\frac{1}{2}}) \ll \frac{x^{\frac{3}{2}}}{\log x}, \tag{8}
\end{equation}
and
$$ \sum_{x^{\frac{1}{2}} < p \le \frac{x}{m}} p = \sum_{x^{\frac{1}{2}} < n \le \frac{x}{m}} n (\pi(n) - \pi(n-1)), $$
where $g(n) = \pi(n) - \pi(n-1)$, stands for the prime number indicator function (with a value of 1 if $n$ is prime, otherwise 0) and $\sum_{n \le y} g(n) = \sum_{p \le y} p = \pi(y)$.
Using the formula (7), and Abel's summation formula [2, p.77], we obtain
$$ \sum_{x^{\frac{1}{2}} < p \le \frac{x}{m}} p = \pi\left(\frac{x}{m}\right) \frac{x}{m} - \pi(x^{\frac{1}{2}}) x^{\frac{1}{2}} - \int_{x^{\frac{1}{2}}}^{\frac{x}{m}} \pi(t) \, dt. $$
Furthermore,
\begin{equation}
\sum_{x^{\frac{1}{2}} < p \le \frac{x}{m}} p = \frac{1}{2} \frac{x^2}{m^2 \log \frac{x}{m}} - \frac{1}{2} \frac{x}{\log x^{\frac{1}{2}}} + O\left( \frac{x^2}{m^2 \log^2 \frac{x}{m}} \right) + O\left( \frac{x}{\log^2 x} \right). \tag{9}
\end{equation}
We now proceed by combining elementary estimates with a Taylor expansion. Since $n \le x^{\frac{1}{2}}$, we have $\log n < \log x$ and thus $\log\left(\frac{x}{n}\right) = \log x \left(1 - \frac{\log n}{\log x}\right)$.
Hence,
\begin{align*}
\frac{x^2}{n^2 \log \frac{x}{n}} &= \frac{x^2}{n^2 \log x \left(1 - \frac{\log n}{\log x}\right)} \\
&= \frac{x^2}{n^2 \log x} \left(1 + O\left(\frac{\log n}{\log x}\right)\right) \\
&= \frac{x^2}{n^2 \log x} + O\left( \frac{x^2 \log n}{n^2 \log^2 x} \right).
\end{align*}
We obtain,
$$ \sum_{n \le x^{\frac{1}{2}}} \frac{x^2}{n^2 \log(x/n)} = \frac{x^2}{\log x} \sum_{n \le x^{\frac{1}{2}}} \frac{1}{n^2} + O\left( \frac{x^2}{\log^2 x} \sum_{n=1}^{\infty} \frac{\log n}{n^2} \right). $$
Noting that the series $\sum_{n=1}^{\infty} \frac{\log n}{n^2}$ converges (its value is $-\zeta'(2)$). Moreover, using the estimate $\sum_{n \le x} \frac{1}{n^2} = \zeta(2) + O(x^{-1})$, we get
\begin{align*}
\frac{x^2}{\log x} \sum_{n \le x^{\frac{1}{2}}} \frac{1}{n^2} &= \frac{x^2}{\log x} \left( \zeta(2) + O(x^{-\frac{1}{2}}) \right) \\
&= \frac{x^2}{\log x} \zeta(2) + O\left( \frac{x^{\frac{3}{2}}}{\log x} \right).
\end{align*}
Leading
$$ \sum_{n \le x^{\frac{1}{2}}} \frac{\log x}{n^2 \log(x/n)} = \frac{x^2}{\log x} \zeta(2) + O\left( \frac{x^{\frac{3}{2}}}{\log x} \right) + O\left( \frac{x^2}{\log^2 x} \right). $$
and similarly
$$ \sum_{m \le x^{\frac{1}{2}}} \frac{x^2}{m^2 \log^2 \frac{x}{m}} \ll \frac{x^2}{\log^2 x}. $$
Using these last two results and formula (9), we obtain
\begin{align}
\sum_{m \le x^{\frac{1}{2}}} \sum_{x^{\frac{1}{2}} < p \le \frac{x}{m}} p &= \zeta(2) \frac{x^2}{\log x} + O\left( \frac{x^{\frac{3}{2}}}{\log x} \right) + O\left( \frac{x^2}{\log^2 x} \right) + O\left( \frac{x^{\frac{3}{2}}}{\log^2 x} \right) \nonumber \\
&= \zeta(2) \frac{x^2}{\log x} + O\left( \frac{x^2}{\log^2 x} \right). \tag{10}
\end{align}
The conjunction of formulas (10) and (6) gives us
\begin{equation}
\sum_{n \le x, P(n)^2 > n} f(n) = \zeta(2) \frac{x^2}{\log x} + O\left( \frac{x^2}{\log^2 x} \right). \tag{11}
\end{equation}
Finally, by substituting formulae (11), (8) and (5) into (4), we complete the proof of Theorem 3.
\end{proof}

\begin{theorem}
Let $x > 1$, be a real number. Then, given any positive integer $k$, such that $k \ge 2$, we have
$$ \sum_{n \le x, n \in S_k} f(n) = \frac{\zeta^2(2)}{2\zeta(2k)} \frac{x^2}{\log x} + O\left( \frac{x^2}{\log^2 x} \right). $$
\end{theorem}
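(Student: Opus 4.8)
The plan is to run the argument of Theorem 3 once more, carrying the constraint $n\in S_k$ through every step; the only genuinely new ingredients are a parametrization of the $k$-free integers $n$ with $P(n)^2>n$ and a single Euler-product evaluation, everything else being word-for-word what was already done. I would first split
$$\sum_{\substack{n\le x\\ n\in S_k}} f(n)=\sum_{\substack{n\le x,\ n\in S_k\\ P(n)^2\le n}} f(n)+\sum_{\substack{n\le x,\ n\in S_k\\ P(n)^2> n}} f(n),$$
and note that the first sum is still $O\!\left(x^{3/2}\log x\right)$: imposing $n\in S_k$ only deletes terms, so the bound $f(n)\ll P(n)\log n\le n^{1/2}\log n$ used for (5) applies unchanged. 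Hence everything reduces to the second sum, on which Lemma 1 gives $f(n)=P(n)$.

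To handle $\sum_{n\le x,\,n\in S_k,\,P(n)^2>n}P(n)$ I would write $n=mp$ with $p=P(n)$. The computation in the proof of Lemma 1 shows $P(n)^2>n$ forces $a_s=1$, so $p\nmid m$; therefore $n\in S_k$ if and only if $m\in S_k$ (adjoining a single prime $p$ coprime to $m$ cannot create a $k$-th power, as $k\ge 2$), and $m<p$. Conversely, every $k$-free $m$ together with a prime $p>m$ (automatically coprime to $m$, with $P(m)<p$) gives such an $n=mp$. This yields the bijective parametrization
$$\sum_{\substack{n\le x,\ n\in S_k\\ P(n)^2> n}} P(n)=\sum_{\substack{m\le x^{1/2}\\ m\in S_k}}\ \sum_{m<p\le x/m} p=\sum_{\substack{m\le x^{1/2}\\ m\in S_k}}\ \sum_{x^{1/2}<p\le x/m} p+O\!\left(\sum_{m\le x^{1/2}} x^{1/2}\,\pi\!\left(x^{1/2}\right)\right),$$
where the error term is $O\!\left(x^{3/2}/\log x\right)$ exactly as in (8). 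For the remaining double sum I would substitute the Abel-summation estimate (9) for $\sum_{x^{1/2}<p\le x/m}p$ and sum over $m\in S_k$ with $m\le x^{1/2}$; here Lemma 2 applies with $\delta(m)=\mathbf{1}_{S_k}(m)$, for which $0\le\delta(m)\le 1$ and $\sum_{m}\delta(m)\log m/m^2<\infty$, giving
$$\sum_{\substack{m\le x^{1/2}\\ m\in S_k}}\frac{x^2}{m^2\log(x/m)}=\frac{x^2}{\log x}\sum_{m\in S_k}\frac{1}{m^2}+O\!\left(\frac{x^2}{\log^2 x}\right),$$
while the remaining pieces of (9) contribute $O\!\left(x^{3/2}/\log x\right)$ and $O\!\left(x^2/\log^2 x\right)$ as in the derivation of (10).

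The one computation with no counterpart in Theorem 3 is the evaluation of the constant $\sum_{m\in S_k}m^{-2}$. Since a positive integer lies in $S_k$ precisely when every prime occurs to exponent at most $k-1$, its Dirichlet series factors as an Euler product,
$$\sum_{m\in S_k}\frac{1}{m^{s}}=\prod_p\Bigl(1+p^{-s}+\cdots+p^{-(k-1)s}\Bigr)=\prod_p\frac{1-p^{-ks}}{1-p^{-s}}=\frac{\zeta(s)}{\zeta(ks)}\qquad(\operatorname{Re}s>1),$$
so that $\sum_{m\in S_k}m^{-2}=\zeta(2)/\zeta(2k)$. Feeding this value into the displays above and collecting all error terms (each $O\!\left(x^{3/2}\log x\right)$ or $O\!\left(x^2/\log^2 x\right)$, hence absorbed into $O\!\left(x^2/\log^2 x\right)$) exactly as in the closing lines of the proof of Theorem 3 yields the claimed asymptotic formula. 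I expect no serious obstacle, only bookkeeping: one must check that $n\leftrightarrow(m,p)$ is a genuine bijection onto the $k$-free $n$ with $P(n)^2>n$ (this is where $k\ge 2$, equivalently $a_s=1<k$, enters), verify that $\mathbf{1}_{S_k}$ satisfies the hypotheses of Lemma 2, and confirm convergence of the auxiliary series $\sum_{m\in S_k}\log m/m^2$; the analytic estimates themselves are identical to those already carried out for Theorem 3.
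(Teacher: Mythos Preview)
Your proposal is correct and follows the paper's proof essentially step for step: split at $P(n)^2\lessgtr n$, bound the $P(n)^2\le n$ part by dropping the $S_k$ constraint and invoking (5), parametrize the main part as $n=mp$ with $m\in S_k$ and $m<p$, separate the inner prime sum at $x^{1/2}$, apply Lemma~2 with $\delta=\mathbf 1_{S_k}$, and evaluate $\sum_{m\in S_k}m^{-2}=\zeta(2)/\zeta(2k)$ via the Euler product (12). The only cosmetic difference is that where you feed in the Abel-summation estimate (9) directly (as in the proof of Theorem~3), the paper instead quotes the ready-made asymptotic for $\sum_{p\le y}p$ with $y=x/m$; your justification of the bijection $n\leftrightarrow(m,p)$ onto the $k$-free $n$ with $P(n)^2>n$ is in fact more explicit than the paper's.
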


\begin{proof}
Firstly, consider $\delta_k(n)$ as the characteristic function of the $k$-free numbers, meaning that,
$$ \delta_k(n) = \begin{cases} 1 & \text{if } n \text{ is } k\text{-free}, \\ 0 & \text{otherwise}. \end{cases} $$
In particular, its generating function for $s > 1$ is given by
\begin{equation}
\sum_{n=1}^{\infty} \frac{\delta_k(n)}{n^s} = \prod_{p} \left( 1 + \frac{1}{p^s} + \dots + \frac{1}{p^{(k-1)s}} \right) = \frac{\zeta(s)}{\zeta(ks)}. \tag{12}
\end{equation}
To begin with, we have
\begin{align}
M(x) &:= \sum_{n \le x, n \in S_k} f(n) = \sum_{n \le x} \delta_k(n) f(n) \nonumber \\
&= \sum_{n \le x, P(n)^2 \le n} \delta_k(n) f(n) + \sum_{n \le x, P(n)^2 > n} \delta_k(n) f(n) \nonumber \\
&= M_1(x) + M_2(x). \tag{13}
\end{align}
To establish an estimate for the sum $M_2(x)$, we rely on the result of Lemma 1, hence
\begin{align*}
M_2(x) &= \sum_{pm \le x, p^2 > mp} \delta_k(m) p \\
&= \sum_{m \le x^{\frac{1}{2}}} \delta_k(m) \sum_{m < p \le \frac{x}{m}} p.
\end{align*}
To proceed properly, we have
\begin{align}
M_2(x) &= \sum_{m \le x^{\frac{1}{2}}} \delta_k(m) \sum_{x^{\frac{1}{2}} < p \le \frac{x}{m}} p + \sum_{m \le x^{\frac{1}{2}}} \delta_k(m) \sum_{m < p \le x^{\frac{1}{2}}} p \nonumber \\
&= \sum_{m \le x^{\frac{1}{2}}} \delta_k(m) \sum_{x^{\frac{1}{2}} < p \le \frac{x}{m}} p + O\left( \frac{x^{\frac{3}{2}}}{\log x} \right). \tag{14}
\end{align}
And more
\begin{align*}
\sum_{m \le x^{\frac{1}{2}}} \delta_k(m) \sum_{x^{\frac{1}{2}} < p \le \frac{x}{m}} p &= \sum_{m \le x^{\frac{1}{2}}} \delta_k(m) \sum_{p \le \frac{x}{m}} p - \sum_{m \le x^{\frac{1}{2}}} \delta_k(m) \sum_{p \le x^{\frac{1}{2}}} p \\
&= \sum_{m \le x^{\frac{1}{2}}} \delta_k(m) \sum_{p \le \frac{x}{m}} p + O\left( \frac{x^{\frac{3}{2}}}{\log x} \right).
\end{align*}
By using the result noted as (1), but this time replacing $x$ with $x/m$, we obtain
\begin{align*}
\sum_{m \le x^{\frac{1}{2}}} \delta_k(m) \sum_{x^{\frac{1}{2}} < p \le \frac{x}{m}} p &= \sum_{m \le x^{\frac{1}{2}}} \delta_k(m) \left( \frac{\pi^2}{12} \frac{(x/m)^2}{\log(x/m)} + O\left( \frac{(x/m)^2}{\log^2(x/m)} \right) \right) + O\left( \frac{x^{\frac{3}{2}}}{\log x} \right) \\
&= \frac{\pi^2 x^2}{12} \sum_{m \le x^{\frac{1}{2}}} \frac{\delta_k(m)}{m^2 \log(x/m)} + O\left( \sum_{m \le x^{\frac{1}{2}}} \frac{x^2}{m^2 \log^2(x/m)} \right) + O\left( \frac{x^{\frac{3}{2}}}{\log x} \right) \\
&= \frac{\pi^2 x^2}{12} \sum_{m \le x^{\frac{1}{2}}} \frac{\delta_k(m)}{m^2 \log(x/m)} + O\left( \frac{x^2}{\log^2 x} \right).
\end{align*}
Using the result obtained in Lemma 2, with (12) it gives us
\begin{align*}
\sum_{m \le x^{\frac{1}{2}}} \delta_k(m) \sum_{x^{\frac{1}{2}} < p \le \frac{x}{m}} p &= \frac{\pi^2 x^2}{12} \left( \frac{1}{\log x} \sum_{n=1}^{\infty} \frac{\delta_k(n)}{n^2} + O\left( \frac{1}{\log^2 x} \right) \right) + O\left( \frac{x^2}{\log^2 x} \right) \\
&= \frac{\zeta^2(2)}{2\zeta(2k)} \frac{x^2}{\log x} + O\left( \frac{x^2}{\log^2 x} \right).
\end{align*}
We substitute in (14) and obtain
\begin{equation}
M_2(x) = \frac{\zeta^2(2)}{2\zeta(2k)} \frac{x^2}{\log x} + O\left( \frac{x^2}{\log^2 x} \right). \tag{15}
\end{equation}
It is obvious that
$$ M_1(x) \le \sum_{n \le x, P(n)^2 \le n} f(n) $$
so from (5) it follows that
\begin{equation}
M_1(x) = O\left( x^{\frac{3}{2}} \log x \right). \tag{16}
\end{equation}
The conjunction of formulas (16), (15) and (13) completes the proof of Theorem 4.
\end{proof}

\begin{remark}
The methods developed in this paper can be extended to study higher moments of the form
$$ \sum_{n \le x} f(n)^r \quad \text{and} \quad \sum_{n \le x, n \in S_k} f(n)^r, $$
for fixed $r \ge 2$. Since $f(n) \ll P(n) \log n$, and moments of $P(n)$ are well-understood (see, e.g., [1]), one expects asymptotic formulas of the shape
$$ \sum_{n \le x} f(n)^r \sim C_r \frac{x^{r+1}}{(\log x)^r} \quad \text{as } x \to \infty, $$
for some constants $C_r > 0$.
\end{remark}

\section*{Acknowledgement}
I would like to express my sincere gratitude to Professor Olivier Bordellès for his interest in this work and for his insightful and constructive comments, which significantly improved its presentation and rigor. I am also deeply grateful to the referees for their thoughtful suggestions and valuable critiques, which helped enhance the clarity and depth of this paper.

\section*{References}
\begin{enumerate}
    \item K. Alladi and P. Erdős, On an additive arithmetic function, Pacific J. Math. 71 no. 2 (1977), 275--294.
    \item T. Apostol, Introduction to Analytic Number Theory, New York, 1976.
    \item O. Bordellès, Arithmetic Tales, Springer, 2020.
    \item P. Erdős and A. Ivić, Estimates for sums involving the largest prime factor of an integer and certain related additive functions, Studia Sci, Math. Hungar, 15 (1980), 183--199.
    \item A. Ivić, The Riemann Zeta-Function: Theory and Applications, Dover, 2003.
    \item J. B. Rosser and L. Schoenfeld, Approximate formulas for some functions of prime numbers, Illinois J. Math, 6 (1962), 64--94.
    \item I. Niven, H. S. Zuckerman, and H. L. Montgomery, An introduction to the theory of numbers, fifth ed., John Wiley \& Sons, New York, 1991.
    \item F. Pappalardi, A survey on $k$-freeness, in Number theory, Ramanujan Math. Soc. Lect. Notes Ser. 1, Ramanujan Math. Soc., Mysore, 2005, pp. 71--88.
\end{enumerate}

\vspace{1cm}
\noindent\textbf{BOUDERBALA, Mihoub} \\
(1) Faculty of matter sciences and Computer Sciences Department of Mathematics FIMA Laboratory, Khemis Miliana University (UDBKM) Rue Thniet El Had, Khemis Miliana, 44225, Ain Defa Province, Algeria \\
(2) National Higher School of Mathematics, P.O. Box 75, Scientific and Technology Hub of Sidi Abdellah, 16093, Algiers, Algeria \\
E-mail address: mihoub75bouder@gmail.com

\end{document}